\documentclass[11pt]{article}
\usepackage[T1]{fontenc}
\usepackage{lmodern}
\usepackage[a4paper,margin=2.5cm]{geometry}
\usepackage{amsmath,amssymb,amsthm}
\usepackage{graphicx}
\usepackage{booktabs}
\usepackage{array}
\usepackage{subcaption}
\usepackage{xcolor}
\usepackage{microtype}
\usepackage{enumitem}
\setlist[enumerate]{itemsep=1pt,parsep=0pt,topsep=3pt}
\usepackage[hidelinks]{hyperref}
\usepackage{authblk}

\newtheorem{theorem}{Theorem}
\newtheorem{proposition}[theorem]{Proposition}

\theoremstyle{definition}

\theoremstyle{remark}

\newcommand{\cP}{\mathcal{P}}
\newcommand{\cM}{\mathcal{M}}

\title{A second eight-faced polyhedron\\ in which every two faces share an edge}

\author{Gergely R\"ost, Viktor V\'igh \\ \small\texttt{rost@math.u-szeged.hu, vigvik@math.u-szeged.hu}}
\affil{Bolyai Institute, University of Szeged, Hungary}\date{26 September 2026}

\begin{document}
\maketitle

\begin{abstract}
We report a novel polyhedral surface of genus~3 embedded in $\mathbb{R}^3$ with eight planar, simple, non-convex nonagonal faces, 24 vertices and 36 edges, in which every two faces share at least one edge: 20 pairs of faces share one edge and 8 pairs share two collinear edges. Its face planes are $3x-4y-2z=5$, $-2x+5y-5z=3$ and their images under the half-turns about the three coordinate axes, and all vertices are rational. The polyhedron has the same face vector, face sizes and number of edge multiplicities as the polyhedron described by Mizhaev, but it is not combinatorially equivalent to it. Our realisation has the symmetry group $D_2$ of order~4, whereas Mizhaev's polyhedron has a rotoreflection symmetry. The example was found by a computational geometric search, and all its properties were verified in exact rational arithmetic.
\end{abstract}

\section{Introduction}

In 1977, Lajos Szilassi found a polyhedron with seven hexagonal faces, each of which shares an edge with each of the other six~\cite{Szilassi86,Gardner78}. It is a torus whose combinatorial structure is dual to that of the Cs\'asz\'ar polyhedron, a seven-vertex torus in which every two vertices are joined by an edge~\cite{Csaszar49}. The tetrahedron and the Szilassi polyhedron have long been referred to as the only known polyhedra in which each face shares an edge with every other face; see for example~\cite{Arseneva24}. We call such polyhedra face-neighbourly.

A simple count explains why such examples are rare. Suppose a polyhedron with $n$ faces having three faces at every vertex, and every two faces share exactly one edge. Then $E=\binom n2$, $3V=2E$, and Euler's formula $V-E+F=2-2g$ gives
\begin{equation}\label{eq:genus}
  g=\frac{(n-3)(n-4)}{12}.
\end{equation}
This is an integer only for $n\equiv 0,3,4,7 \pmod{12}$, so under these assumptions the next case after the tetrahedron ($n=4$) and the Szilassi polyhedron ($n=7$) has $n=12$ faces, 66 edges, 44 vertices and genus~6. Its combinatorial type would be dual to one of the 59 neighbourly triangulations of the orientable surface of genus~6 with 12 vertices~\cite{Altshuler96}, and whether it can be realised with flat faces is open. The dual question, whether one of those 59 triangulations can be realised as a polyhedron, was settled negatively by Schewe using oriented matroids and satisfiability solvers~\cite{Schewe10}. His result does not transfer to the face version, because the dual of an embedded polyhedron need not be embedded.

The question of every two faces sharing at least one edge, allows other face counts however, because some pairs of faces may meet along two collinear edges. Mizhaev~\cite{Mizhaev20,Mizhaev26} constructed such a polyhedron with eight nonagonal faces and genus~3, in which 20 pairs of faces share one edge and 8 pairs share two. In this note we present a second polyhedron with exactly these numbers which is not combinatorially equivalent to Mizhaev's. The two differ in how the doubled pairs are arranged, in their combinatorial automorphism groups, and in their symmetry and chirality.

\section{The polyhedron}

Let $R_x,R_y,R_z$ denote the half-turns about the coordinate axes, e.g.\ $R_x(x,y,z)=(x,-y,-z)$. Together with the identity they form the group $D_2\cong\mathbb{Z}_2\times\mathbb{Z}_2$. The polyhedron $\cP$ has two orbits of four faces under $D_2$,
\[
  A_1:\ 3x-4y-2z=5, \qquad B_1:\ -2x+5y-5z=3,
\]
with $A_2=R_z(A_1)$, $A_3=R_x(A_1)$, $A_4=R_y(A_1)$ and similarly for $B_2,B_3,B_4$. The eight planes are listed in Table~\ref{tab:planes}. They are in general position: no two are parallel, and every three of them meet in a single point that lies on no fourth plane. Each vertex of $\cP$ is the intersection point of the planes of the three faces containing it, so all coordinates are rational. Table~\ref{tab:vertices} lists the 24 vertices together with the faces meeting at each. The vertices form six $D_2$-orbits of size four, numbered consecutively; within each orbit the order is $v,\ R_z v,\ R_x v,\ R_y v$.

\begin{theorem}\label{thm:main}
The eight polygons of Table~\ref{tab:planes} form a polyhedron $\cP$ of genus~$3$ with the following properties.
\begin{enumerate}
\item $\cP$ has $8$ faces, $36$ edges and $24$ vertices; every face is a simple non-convex nonagon and every vertex lies in exactly three faces, so $\cP$ is equivelar of type $\{9,3\}$.
\item Every two faces share an edge. The pairs sharing two (collinear) edges are
\[
  A_1B_2,\ A_1B_4,\ A_3B_2,\ A_3B_4 \quad\text{and}\quad A_2B_1,\ A_2B_3,\ A_4B_1,\ A_4B_3,
\]
and the remaining $20$ pairs share exactly one edge.
\item The faces $A_i$ have three reflex corners each and the faces $B_i$ have two.
\item The symmetry group of $\cP$ is exactly $D_2=\{\mathrm{id},R_x,R_y,R_z\}$. In particular $\cP$ is chiral.
\item The combinatorial automorphism group of $\cP$ is the dihedral group of order $8$. It acts simply transitively on the eight faces, and all its elements preserve orientation.
\item The volume of $\cP$ is $351105133/18170460\approx 19.323$, and its convex hull is the tetrahedron spanned by vertices $1$--$4$.
\end{enumerate}
\end{theorem}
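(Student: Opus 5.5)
The proof is a finite, exact verification, organised so that the $D_2$-symmetry cuts the work by a factor of four. We begin from the data of Tables~\ref{tab:planes} and~\ref{tab:vertices}. For each of the six orbit representatives among the vertices, we solve the $3\times 3$ rational linear system of its three listed face planes and check that the solution equals the tabulated vertex. The remaining vertices then follow automatically, because $R_x,R_y,R_z$ permute the planes as prescribed. We confirm general position by computing the $\binom83$ triple intersections and checking that none lies on a fourth plane. For each face we read off its cyclic sequence of nine vertices, and each consecutive pair becomes an edge. We check that every such segment lies on exactly two face planes and appears in exactly those two faces' boundaries. This yields $E=36$ and $V=24$ with every vertex of degree three, so the surface is a closed $2$-manifold of type $\{9,3\}$. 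Euler's formula gives $24-36+8=-4=2-2g$, hence $g=3$. Orientability is checked by choosing a cyclic orientation of each face and verifying that every edge is traversed in opposite directions by its two faces.

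Next we verify that each face is a simple nonagon and that the complex is embedded in $\mathbb{R}^3$. For each face we project to a coordinate plane in which the projection is injective. There we check that nonadjacent sides of the polygon do not intersect, and we record the sign of the cross product at each corner. This count of sign changes gives three reflex corners for each $A_i$ and two for each $B_i$, which is item~3. Embeddedness is the step I expect to be the main obstacle, since it is a global condition. We handle it by checking, for every pair of faces, that their polygons meet only in their common boundary. For a pair of faces, we intersect each polygon with the line $\ell$ where the two planes meet; each intersection is a union of segments computed exactly. The two faces are disjoint apart from shared edges and vertices exactly when these segment sets overlap only in those shared pieces. Up to symmetry there are only $28/\,$(orbit sizes) pairs to examine. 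The same computation gives item~2: a pair shares an edge precisely when $\ell$ contains sides of both polygons. Listing these sides shows that the eight named pairs share two collinear edges and the other twenty share exactly one.

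For item~6 we compute the volume exactly via the divergence theorem, $\frac13\sum_F d_F\,\mathrm{area}(F)/|n_F|$. Equivalently, we triangulate each face and sum signed tetrahedra from the origin. For the convex hull, we check that every vertex lies in the tetrahedron on vertices $1$--$4$ by evaluating four barycentric inequalities.

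For items~4 and~5 we first determine the combinatorial automorphism group. The face-adjacency graph is $K_8$, so we instead use the multigraph that doubles the eight double-edge pairs. In it the doubled pairs form an $8$-cycle $A_1B_2A_3B_4A_1$ together with $A_2B_1A_4B_3A_2$; these are two $4$-cycles, i.e.\ a $2$-regular structure on the bipartite sets. We then search exhaustively over face permutations preserving this structure and extending to the vertex–face incidence, which means preserving the vertex triples. The result is a group of order $8$, and we identify it as dihedral from the orders of its elements. Simple transitivity on faces follows from its order being $8$ together with a single orbit. Orientation is preserved because each generator maps the chosen face orientations consistently. Any Euclidean symmetry induces a combinatorial automorphism, so it suffices to test the $8$ candidates. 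The four in $D_2$ are realised. For each of the other four, we check that the unique affine map determined by four affinely independent image vertices is either not an isometry or does not map all vertices correctly. This pins the symmetry group down to $D_2$, which contains no orientation-reversing isometry, so $\cP$ is chiral.
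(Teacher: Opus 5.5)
Your plan is sound. For items 1--3, 5 and 6 it is essentially the paper's own exact verification:
\begin{itemize}
\item vertices computed as triple plane intersections;
\item every edge occurring in two cycles with opposite directions;
\item every pair of faces intersected on the line of their planes;
\item a signed volume;
\item a containment test against the tetrahedron on vertices 1--4.
\end{itemize}

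The genuine difference is item~4. You obtain it from item~5: symmetries inject into the order-$8$ automorphism group, and you exclude the four automorphisms outside $D_2$ by testing the affine map each one forces on four affinely independent vertices. The paper's argument needs no enumeration. A symmetry fixes the vertex centroid, which is the origin, so it is orthogonal and linear. The distances $5/\sqrt{29}$ and $1/\sqrt6$ of the planes from the origin force it to permute the $A$-planes. The inner products $-21,-11,3$ of the normals $a_i$ allow only the identity and the three double transpositions. Since $a_1,a_2,a_3$ span $\mathbb{R}^3$, each permutation comes from at most one linear map, which is an element of $D_2$. That proof can be checked by hand and does not depend on the search. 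Yours is quick to set up but relies entirely on the correctness of the automorphism computation.

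Three details need repair.
\begin{itemize}
\item \emph{Volume formula.} With the planes exactly as in Table~\ref{tab:planes}, your formula $\frac13\sum_F d_F\,\mathrm{area}(F)/|n_F|$ gives about $28.31$, not $19.323$. The normal $(3,-4,-2)$ of $A_1$ points out of the solid, but $(-2,5,-5)$ of $B_1$ points into it. At the convex corner $1\to 9\to 24$ of the outward-oriented cycle, the cross product of the two edge vectors is a positive multiple of $(2,-5,5)$. So once $n_F$ is taken outward, the $B$-faces must enter with $d_F=-3$. Your signed-tetrahedra version with the oriented cycles is the correct one.
\item \emph{Reflex corners.} These are the corners whose turn disagrees with the orientation of the polygon. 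Counting sign changes instead would give $6$ for $A_1$ and $4$ for $B_1$.
\item \emph{Automorphism search.} A face permutation that preserves the vertex triples need not preserve edges. For a doubled pair, the four collinear common vertices could be paired into edges differently. You must also check that face cycles map to face cycles, or, as the paper does, enumerate the images of one directed edge.
\end{itemize}

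Also, the doubled pairs form two $4$-cycles, not an $8$-cycle as your wording first says.
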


\begin{table}[thbp]
\centering
\caption{Face planes and faces of $\cP$. Each face is given by its vertices in counter-clockwise order as seen from outside the solid.}
\label{tab:planes}
\begin{tabular}{llll}
\toprule
face & plane & image of & boundary cycle \\
\midrule
$A_1$ & $3x - 4y - 2z = 5$ & --- & 1 2 5 7 17 13 21 12 9 \\
$A_2$ & $-3x + 4y - 2z = 5$ & $R_z(A_1)$ & 6 8 18 14 22 11 10 2 1 \\
$A_3$ & $3x + 4y + 2z = 5$ & $R_x(A_1)$ & 7 5 19 15 23 10 11 3 4 \\
$A_4$ & $-3x - 4y + 2z = 5$ & $R_y(A_1)$ & 4 3 8 6 20 16 24 9 12 \\
$B_1$ & $-2x + 5y - 5z = 3$ & --- & 1 9 24 23 15 14 18 20 6 \\
$B_2$ & $2x - 5y - 5z = 3$ & $R_z(B_1)$ & 13 17 19 5 2 10 23 24 16 \\
$B_3$ & $-2x - 5y + 5z = 3$ & $R_x(B_1)$ & 21 13 16 20 18 8 3 11 22 \\
$B_4$ & $2x + 5y + 5z = 3$ & $R_y(B_1)$ & 4 12 21 22 14 15 19 17 7 \\
\bottomrule
\end{tabular}
\end{table}

\begin{table}[tbp]
\centering
\caption{The 24 vertices of $\cP$ and the three faces meeting at each. Rows are grouped by $D_2$-orbits.}
\label{tab:vertices}
\small
\begin{tabular}{rrrrl c rrrrl}
\toprule
\# & $x$ & $y$ & $z$ & faces & & \# & $x$ & $y$ & $z$ & faces\\
\midrule
1 & $-38/7$ & $-57/14$ & $-5/2$ & $A_1\,A_2\,B_1$ & & 13 & $13/11$ & $-3/5$ & $26/55$ & $A_1\,B_2\,B_3$ \\
2 & $38/7$ & $57/14$ & $-5/2$ & $A_1\,A_2\,B_2$ & & 14 & $-13/11$ & $3/5$ & $26/55$ & $A_2\,B_1\,B_4$ \\
3 & $-38/7$ & $57/14$ & $5/2$ & $A_3\,A_4\,B_3$ & & 15 & $13/11$ & $3/5$ & $-26/55$ & $A_3\,B_1\,B_4$ \\
4 & $38/7$ & $-57/14$ & $5/2$ & $A_3\,A_4\,B_4$ & & 16 & $-13/11$ & $-3/5$ & $-26/55$ & $A_4\,B_2\,B_3$ \\ \addlinespace[2pt]
5 & $5/3$ & $-1/15$ & $2/15$ & $A_1\,A_3\,B_2$ & & 17 & $3/2$ & $-1/4$ & $1/4$ & $A_1\,B_2\,B_4$ \\
6 & $-5/3$ & $1/15$ & $2/15$ & $A_2\,A_4\,B_1$ & & 18 & $-3/2$ & $1/4$ & $1/4$ & $A_2\,B_1\,B_3$ \\
7 & $5/3$ & $1/15$ & $-2/15$ & $A_1\,A_3\,B_4$ & & 19 & $3/2$ & $1/4$ & $-1/4$ & $A_3\,B_2\,B_4$ \\
8 & $-5/3$ & $-1/15$ & $-2/15$ & $A_2\,A_4\,B_3$ & & 20 & $-3/2$ & $-1/4$ & $-1/4$ & $A_4\,B_1\,B_3$ \\ \addlinespace[2pt]
9 & $-37/38$ & $-5/4$ & $-111/76$ & $A_1\,A_4\,B_1$ & & 21 & $31/23$ & $-62/115$ & $3/5$ & $A_1\,B_3\,B_4$ \\
10 & $37/38$ & $5/4$ & $-111/76$ & $A_2\,A_3\,B_2$ & & 22 & $-31/23$ & $62/115$ & $3/5$ & $A_2\,B_3\,B_4$ \\
11 & $-37/38$ & $5/4$ & $111/76$ & $A_2\,A_3\,B_3$ & & 23 & $31/23$ & $62/115$ & $-3/5$ & $A_3\,B_1\,B_2$ \\
12 & $37/38$ & $-5/4$ & $111/76$ & $A_1\,A_4\,B_4$ & & 24 & $-31/23$ & $-62/115$ & $-3/5$ & $A_4\,B_1\,B_2$ \\
\bottomrule
\end{tabular}
\end{table}

\begin{figure}[tbp]
\centering
\begin{subfigure}{0.43\textwidth}\centering\includegraphics[width=\textwidth]{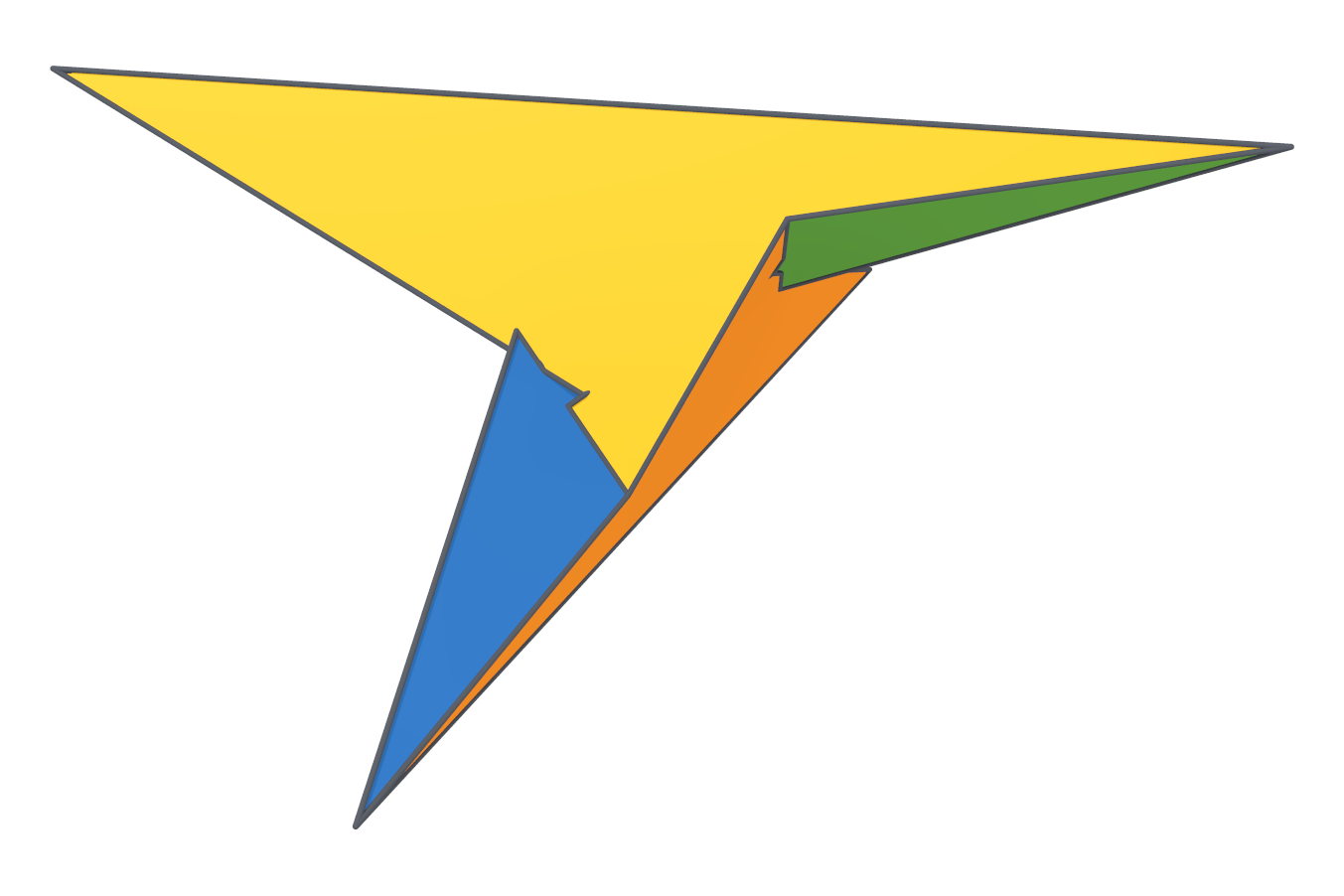}\caption{}\end{subfigure}\hspace{0.04\textwidth}
\begin{subfigure}{0.43\textwidth}\centering\includegraphics[width=\textwidth]{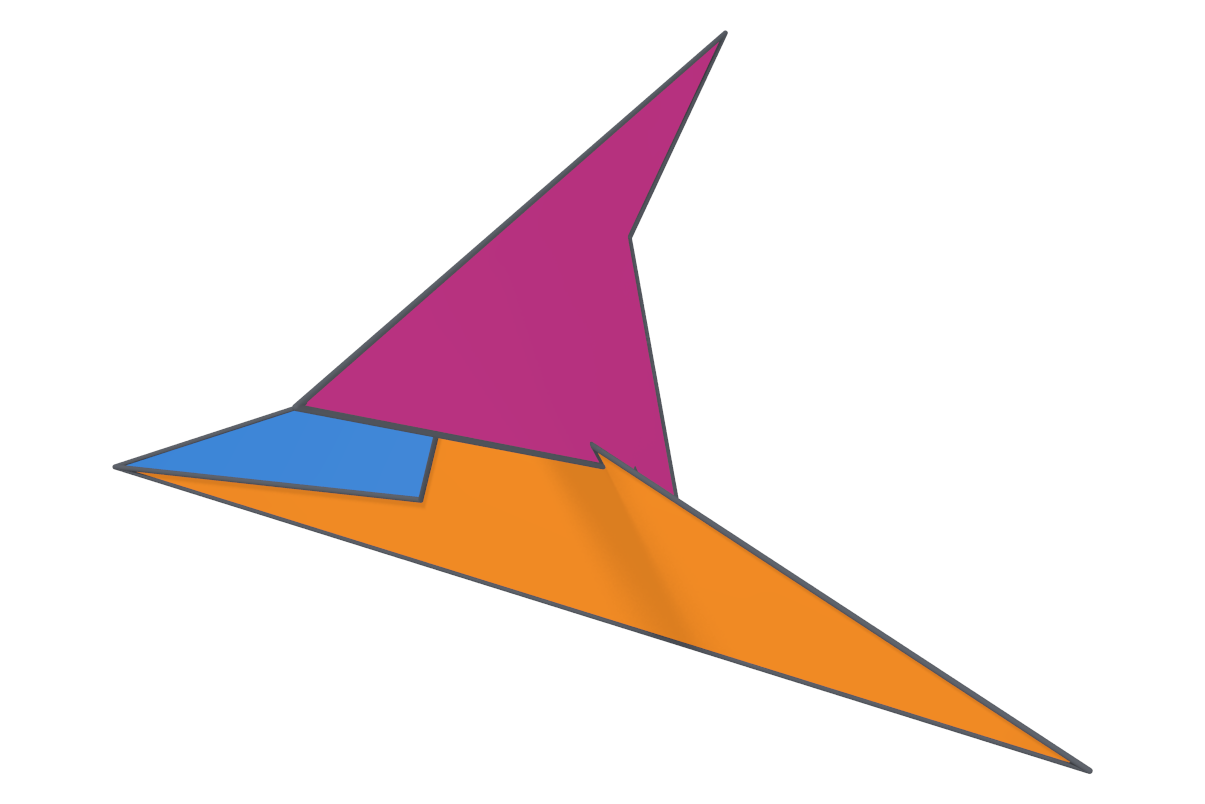}\caption{}\end{subfigure}\\[4pt]
\begin{subfigure}{0.43\textwidth}\centering\includegraphics[width=\textwidth]{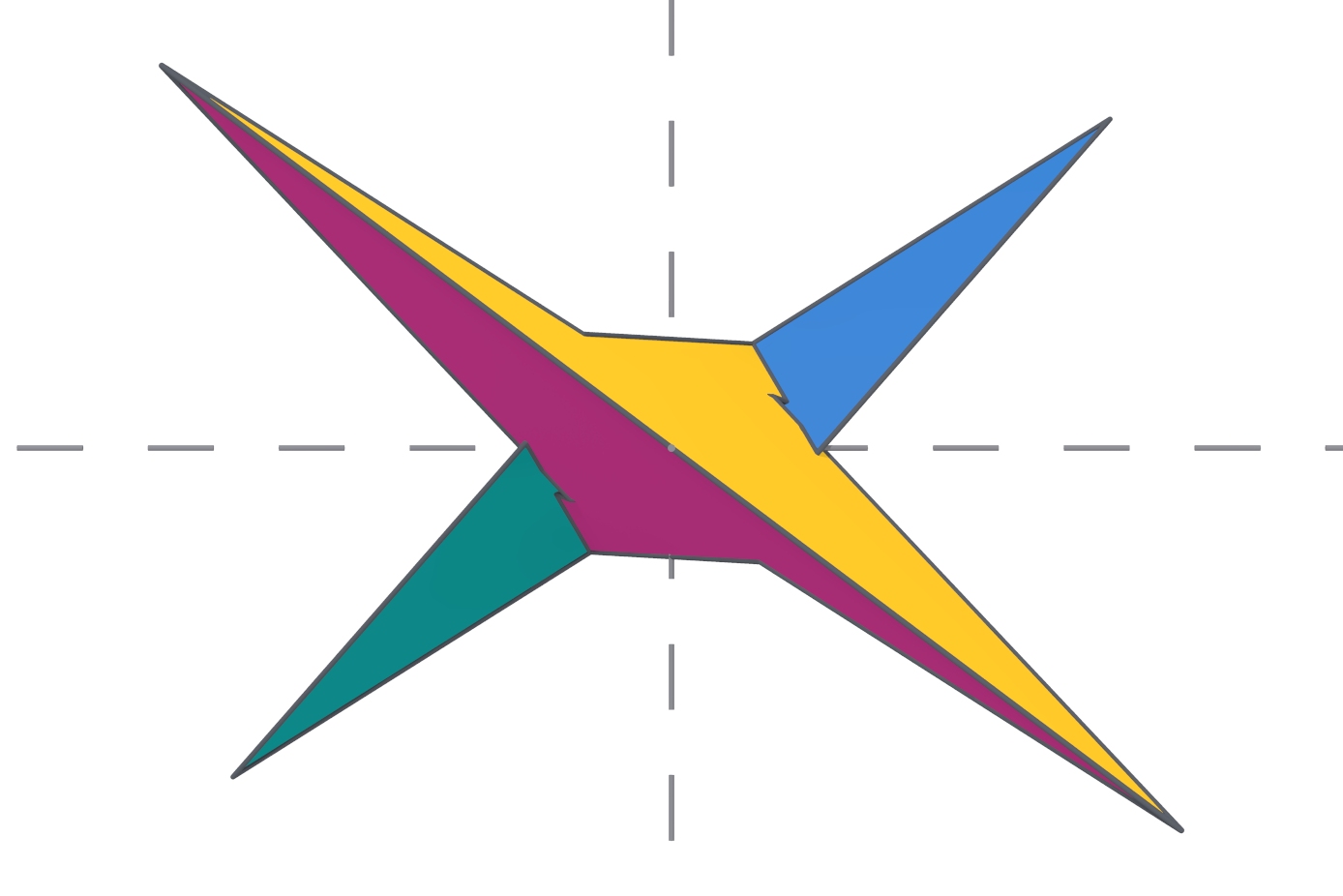}\caption{}\end{subfigure}\hspace{0.04\textwidth}
\begin{subfigure}{0.43\textwidth}\centering\includegraphics[width=\textwidth]{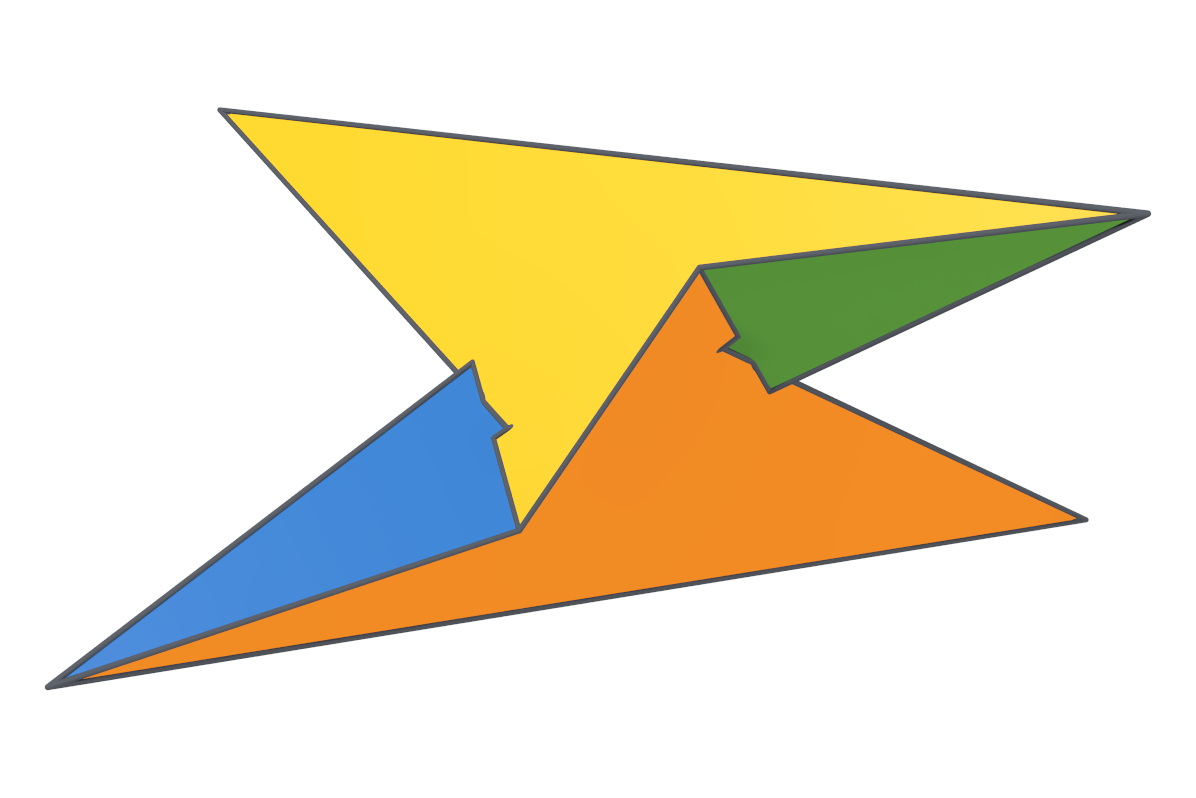}\caption{}\end{subfigure}
\caption{The polyhedron $\cP$ seen from four directions. Faces $A_1,\dots,A_4$ are drawn in warm colours (red, orange, yellow, magenta) and $B_1,\dots,B_4$ in cool colours (teal, blue, green, violet), as in the other figures. In (c) the view is along the $z$-axis; the dashed lines are the $x$- and $y$-axes, two of the three half-turn axes. The four long spikes end at the vertices $1$--$4$, which span the convex hull of~$\cP$.}
\label{fig:views}
\end{figure}

Items 1--3, 5 and~6 are verified in exact arithmetic as described in Section~\ref{sec:verification}; for item~4 we also give a short proof.

\begin{proof}[Proof of item 4]
Every symmetry $g$ of $\cP$ permutes the vertices, so it fixes their centroid, which is the origin because the vertex set is invariant under $D_2$. Hence $g$ is an orthogonal linear map that permutes the face planes. The planes $A_i$ are at distance $5/\sqrt{29}$ from the origin and the planes $B_i$ at distance $3/\sqrt{54}=1/\sqrt6$, so $g$ permutes the planes $A_i$. Write $A_i$ as $\langle a_i,x\rangle=5$ with $a_1=(3,-4,-2)$, $a_2=(-3,4,-2)$, $a_3=(3,4,2)$ and $a_4=(-3,-4,2)$. Then $g(A_i)=A_j$ means $g a_i=a_j$, so $g$ permutes the vectors $a_i$ and preserves their inner products
\[
  \langle a_1,a_2\rangle=\langle a_3,a_4\rangle=-21,\qquad \langle a_1,a_3\rangle=\langle a_2,a_4\rangle=-11,\qquad \langle a_1,a_4\rangle=\langle a_2,a_3\rangle=3 .
\]
The only permutations of $\{a_1,\dots,a_4\}$ preserving these values are the identity and the three double transpositions. Since $a_1,a_2,a_3$ span $\mathbb{R}^3$, $g$ is determined by its permutation, and each of the four possibilities is realised by an element of $D_2$. So the symmetry group is contained in $D_2$; conversely, $R_x$, $R_y$ and $R_z$ are symmetries by construction (Table~\ref{tab:planes}; see also Section~\ref{sec:verification}). Hence the symmetry group is $D_2$, which contains no orientation-reversing isometry.
\end{proof}

By item~5 the eight faces are alike combinatorially, although geometrically the faces
$A_i$ and $B_i$ differ, e.g.\ in the number of reflex corners. As no automorphism
reverses orientation, every polyhedron combinatorially equivalent to $\mathcal P$
is chiral: an improper isometry mapping a polyhedron onto itself maps outward normals to
outward normals and reverses the orientation of space, hence that of the surface.
\begin{figure}[tbp]
\centering
\includegraphics[width=0.82\textwidth]{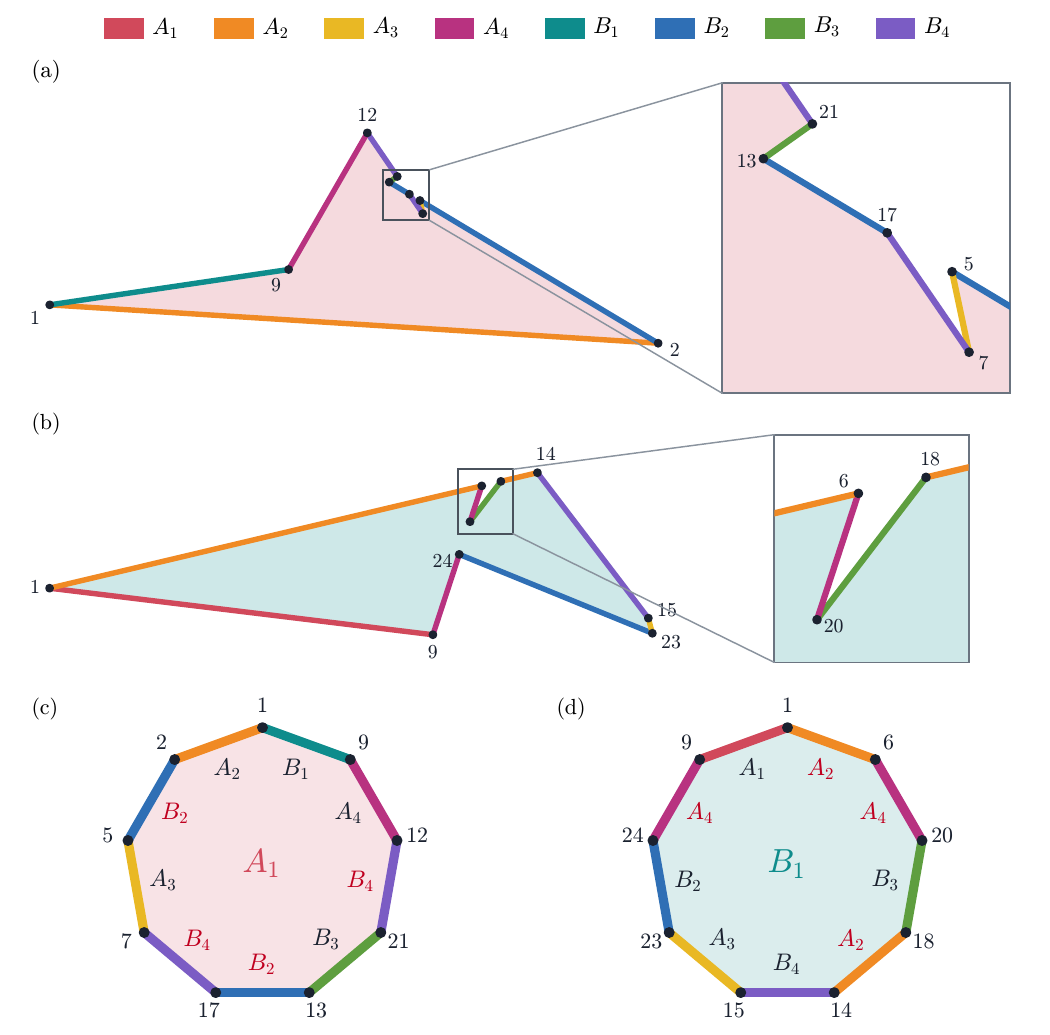}
\caption{(a), (b) The faces $A_1$ and $B_1$ in their true shapes, seen from outside, with every edge coloured by the face on the other side of it (colour key at the top); the insets magnify the small rectangles. The other faces are congruent to these, by the symmetries. The two edges that $A_1$ shares with $B_2$ (blue) lie on one line, and so do the two it shares with $B_4$ (violet). (c), (d) The same boundary cycles drawn as regular nonagons, with the vertex numbers of Table~\ref{tab:vertices} and the neighbouring face written along each edge. Red labels mark the two faces that are met twice.}
\label{fig:faces}
\end{figure}

\section{Verification}\label{sec:verification}

The following verifications have been carried out.
\begin{enumerate}
\item \emph{Planes and vertices.} No two planes are parallel, every three meet in one point, and no four pass through a common point. Every vertex lies on the planes of the three faces containing it and on no other face plane.
\item \emph{Simplicity and reflex corners.} In each face no two non-consecutive edges
meet, and no two consecutive edges overlap. Seen from outside, the boundary of $A_1$
turns clockwise exactly at the vertices $7$, $9$, $13$, and that of $B_1$ at $20$, $24$;
the other faces are their images under $D_2$. So every face is non-convex, with three
reflex corners in each $A_i$ and two in each $B_i$.
\item \emph{Closedness and orientability.} Every edge occurs in exactly two face cycles, once in each direction.
\item \emph{Vertex links.} At every vertex the faces around it form a single cycle, so the surface is a closed 2-manifold. Its Euler characteristic is $24-36+8=-4$, hence its genus is~$3$.
\item \emph{Embedding.} For each of the $28$ pairs of faces the intersection of the two closed polygons is computed on the line where their planes meet, as a union of closed intervals. In every case it equals the union of the edges and vertices the two faces share.
\item \emph{Orientation and volume.} The signed volume computed from the oriented faces is positive, so all face cycles are oriented outwards; its value is given in Theorem~\ref{thm:main}. All vertices lie in the tetrahedron spanned by vertices $1$--$4$.
\item \emph{Symmetries.} The program finds all orthogonal maps that permute the eight face planes, and keeps those that map vertices to vertices and face cycles to face cycles. For $\cP$ these are exactly the identity, $R_x$, $R_y$ and $R_z$.
\item \emph{Automorphisms.} An automorphism of the map is determined by the image of one directed edge, so the program tries all $72$ images, for the map and for its mirror image. For $\cP$ it finds eight automorphisms, all orientation-preserving: the identity, five involutions and two elements of order~$4$, so the group is dihedral (no other group of order~8 has five involutions). No automorphism except the identity fixes a face.
\end{enumerate}
Items 1--5 show that $\cP$ is an embedded closed orientable surface of genus 3 with the stated faces, edges and vertices; the edge lists give the multiplicities in Theorem~\ref{thm:main}. 

\section{Comparison with Mizhaev's polyhedron}

Mizhaev's polyhedron $\cM$~\cite{Mizhaev20,Mizhaev26} has the face planes
\begin{align*}
F_{1,2}&:\ 21x+3y\mp 10z=\mp 1440, & F_{3,4}&:\ 3x\pm z=\pm 234,\\
F_{7,8}&:\ 3x-21y\pm 10z=\mp 1440, & F_{5,6}&:\ 3y\mp z=\pm 234,
\end{align*}
with the upper signs for the face with the smaller index. Applying our construction procedure to these planes reproduces his polyhedron, with integer vertices in $[-300,300]\times[-300,300]\times[-234,234]$. Its symmetry group is the cyclic group of order~4 generated by the rotoreflection $T(x,y,z)=(y,-x,-z)$~\cite{Mizhaev26}, which permutes the faces as $F_1\to F_7\to F_2\to F_8\to F_1$ and $F_3\to F_6\to F_4\to F_5\to F_3$. Every combinatorial automorphism of $\cM$ is induced by a power of $T$~\cite{Mizhaev26}, so its automorphism group coincides with its symmetry group and is generated by an orientation-reversing element. The program of Section~\ref{sec:verification} confirms all of this. Thus $\cM$ realises all its combinatorial symmetries, while $\cP$ realises half of them. Table~\ref{tab:compare} compares the two polyhedra.

\begin{table}[tbp]
\centering
\caption{Comparison of $\cP$ with Mizhaev's polyhedron $\cM$.}
\label{tab:compare}
\begin{tabular}{@{}lll@{}}
\toprule
 & $\cP$ (this note) & $\cM$ \cite{Mizhaev26} \\
\midrule
faces, edges, vertices, genus & $8,36,24,3$ & $8,36,24,3$\\
face size, vertex degree & $9,3$ & $9,3$\\
pairs sharing one / two edges & $20$ / $8$ & $20$ / $8$\\
faces with $3$ / $2$ reflex corners & $4$ / $4$ & $4$ / $4$\\
\addlinespace
graph of doubled pairs & two $4$-cycles & one $8$-cycle\\
combinatorial automorphisms & dihedral, order $8$ & cyclic, order $4$\\
\quad of which orientation-reversing & none & two\\
symmetry group & $D_2$ (three half-turns) & $\langle T\rangle$, $T$ a rotoreflection\\
chirality & chiral (every realisation) & achiral\\
\addlinespace
vertex coordinates & rational & integers\\
largest plane coefficient & $5$ & $1440$\\
\bottomrule
\end{tabular}
\end{table}

\begin{proposition}\label{prop:distinct}
$\cP$ and $\cM$ are not combinatorially equivalent, even if orientation reversal is allowed.
\end{proposition}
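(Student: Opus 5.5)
A combinatorial equivalence, orientation-preserving or not, is a bijection between the face sets, edge sets and vertex sets that preserves incidences. So I would prove the proposition by exhibiting an invariant that such a bijection must preserve and showing that it takes different values on $\cP$ and $\cM$. The simplest choice is the \emph{graph of doubled pairs}. Its vertices are the eight faces, and two faces are adjacent when they share two edges. Any combinatorial equivalence $\varphi\colon\cP\to\cM$ maps faces to faces and edges to edges. If two faces share exactly $k$ edges, their images therefore share exactly $k$ edges as well. Hence $\varphi$ induces a graph isomorphism between the two graphs of doubled pairs, and this holds whether or not $\varphi$ reverses orientation, since orientation plays no role in this invariant.

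First I determine the graph for $\cP$. By Theorem~\ref{thm:main}(2) the doubled pairs are $A_1B_2, A_1B_4, A_3B_2, A_3B_4$ and $A_2B_1, A_2B_3, A_4B_1, A_4B_3$. The first four edges form the $4$-cycle $A_1B_2A_3B_4A_1$ on the vertices $\{A_1,A_3,B_2,B_4\}$. The second four form the $4$-cycle $A_2B_1A_4B_3A_2$ on the complementary four vertices. So the graph is the disjoint union of two $4$-cycles, and in particular it is disconnected.

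Second I determine the graph for $\cM$. I would read off its eight doubled pairs from the face cycles of $\cM$, which the program of Section~\ref{sec:verification} reconstructs from the listed planes, or equivalently from Mizhaev's description~\cite{Mizhaev26}. The claim to verify is that these eight edges form a single $8$-cycle, as recorded in Table~\ref{tab:compare}. The symmetry $T$ helps here. It acts on the faces with the two $4$-cycles $F_1\to F_7\to F_2\to F_8$ and $F_3\to F_6\to F_4\to F_5$. The doubled-pair graph is $T$-invariant, so it suffices to find one or two doubled pairs and take their $T$-orbits. For example, if $F_1F_3$ is doubled, its orbit gives the four edges $F_1F_3, F_7F_6, F_2F_4, F_8F_5$. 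A second orbit of the type $F_3F_7$ would then close these into an $8$-cycle, namely $F_1F_3F_7F_6F_2F_4F_8F_5F_1$. This step is the only real obstacle, since it needs explicit data about $\cM$ (which pairs of its faces meet twice) that the present note only summarises. I would obtain it by exact computation in the same rational arithmetic as Section~\ref{sec:verification}.

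Since $C_8$ is connected and $C_4\sqcup C_4$ is not, the two graphs are not isomorphic. Hence no combinatorial equivalence exists, with or without orientation reversal. As an independent check I would also compare automorphism groups. Every combinatorial equivalence, orientation-preserving or not, conjugates the full automorphism group of one map onto that of the other, so these groups must be isomorphic. For $\cP$ the group is dihedral of order $8$ by Theorem~\ref{thm:main}(5), because the program also searched the mirror image and found no orientation-reversing automorphisms. For $\cM$ it is cyclic of order $4$~\cite{Mizhaev26}. The groups already differ in order, so the two polyhedra are not equivalent.
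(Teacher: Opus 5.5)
Your argument is correct and is essentially the paper's proof. You use the same doubled-pair graph, $C_4\sqcup C_4$ for $\cP$ against a single $8$-cycle for $\cM$; the paper supplies the missing data explicitly as the $T$-orbits of $F_1F_4$ and $F_1F_5$, giving the cycle $F_1F_4F_8F_6F_2F_3F_7F_5$, so your sample pair $F_1F_3$ is not actually doubled. Your supplementary comparison of full automorphism groups (order $8$ versus order $4$, with orientation-reversing maps counted in both) is also sound on the stated data and would settle the proposition by itself.
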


\begin{proof}
For a face-neighbourly polyhedron $X$ let $D(X)$ be the graph whose vertices are the faces of $X$, with two faces joined when they share two edges. Any combinatorial equivalence $X\to Y$ induces an isomorphism $D(X)\to D(Y)$. By Theorem~\ref{thm:main}, $D(\cP)$ consists of the two $4$-cycles $A_1B_2A_3B_4$ and $A_2B_1A_4B_3$. For $\cM$ the doubled pairs are $F_1F_4$, $F_1F_5$, $F_2F_3$, $F_2F_6$, $F_3F_7$, $F_4F_8$, $F_5F_7$ and $F_6F_8$, which form the single $8$-cycle $F_1F_4F_8F_6F_2F_3F_7F_5$. The graphs are not isomorphic.
\end{proof}

\begin{figure}[tbp]
\centering
\includegraphics[width=\textwidth]{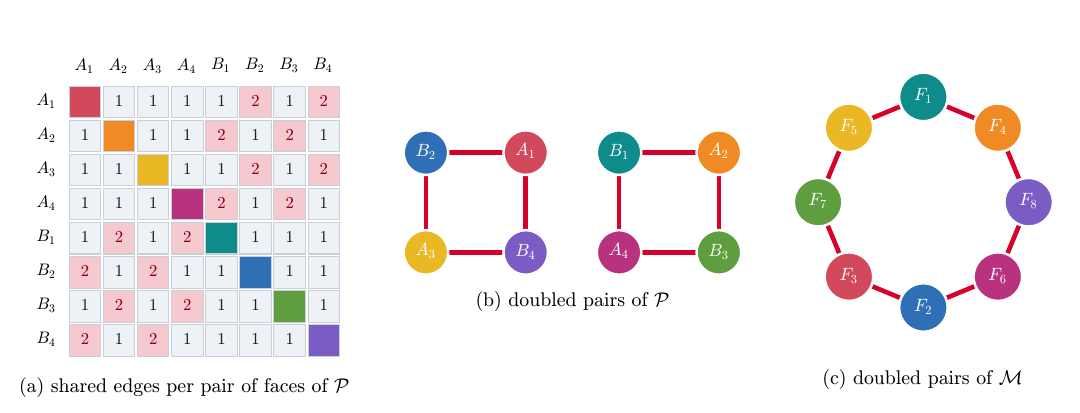}
\caption{(a) Number of edges shared by each pair of faces of $\cP$. (b) The doubled pairs of $\cP$ form two $4$-cycles. (c) The doubled pairs of Mizhaev's polyhedron form one $8$-cycle; its faces $F_3,\dots,F_6$, which have three reflex corners each, are drawn in warm colours.}
\label{fig:graphs}
\end{figure}

\begin{figure}[tbp]
\centering
\begin{subfigure}{0.36\textwidth}\centering\includegraphics[width=\textwidth]{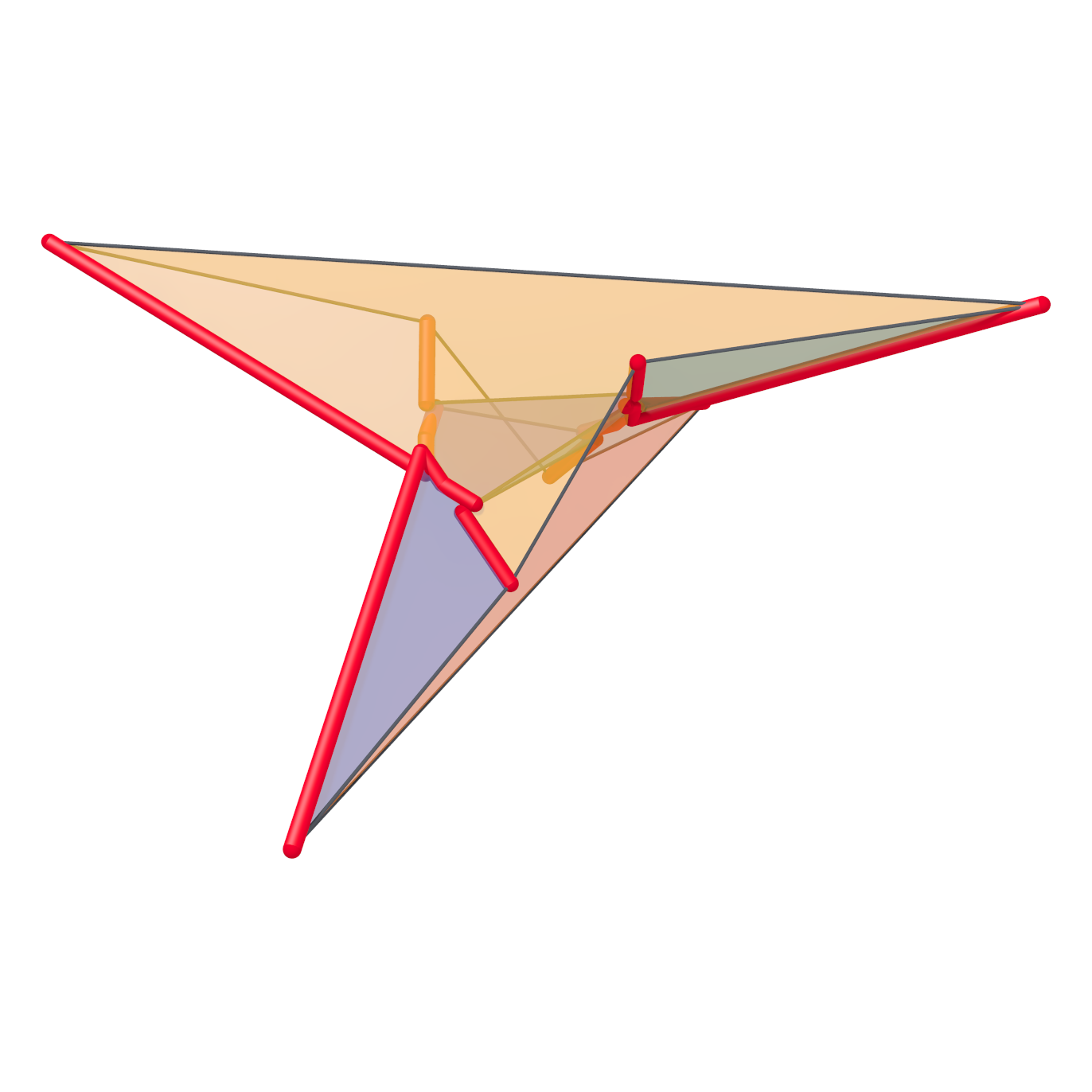}\caption{$\cP$}\end{subfigure}\hspace{0.08\textwidth}
\begin{subfigure}{0.36\textwidth}\centering\includegraphics[width=\textwidth]{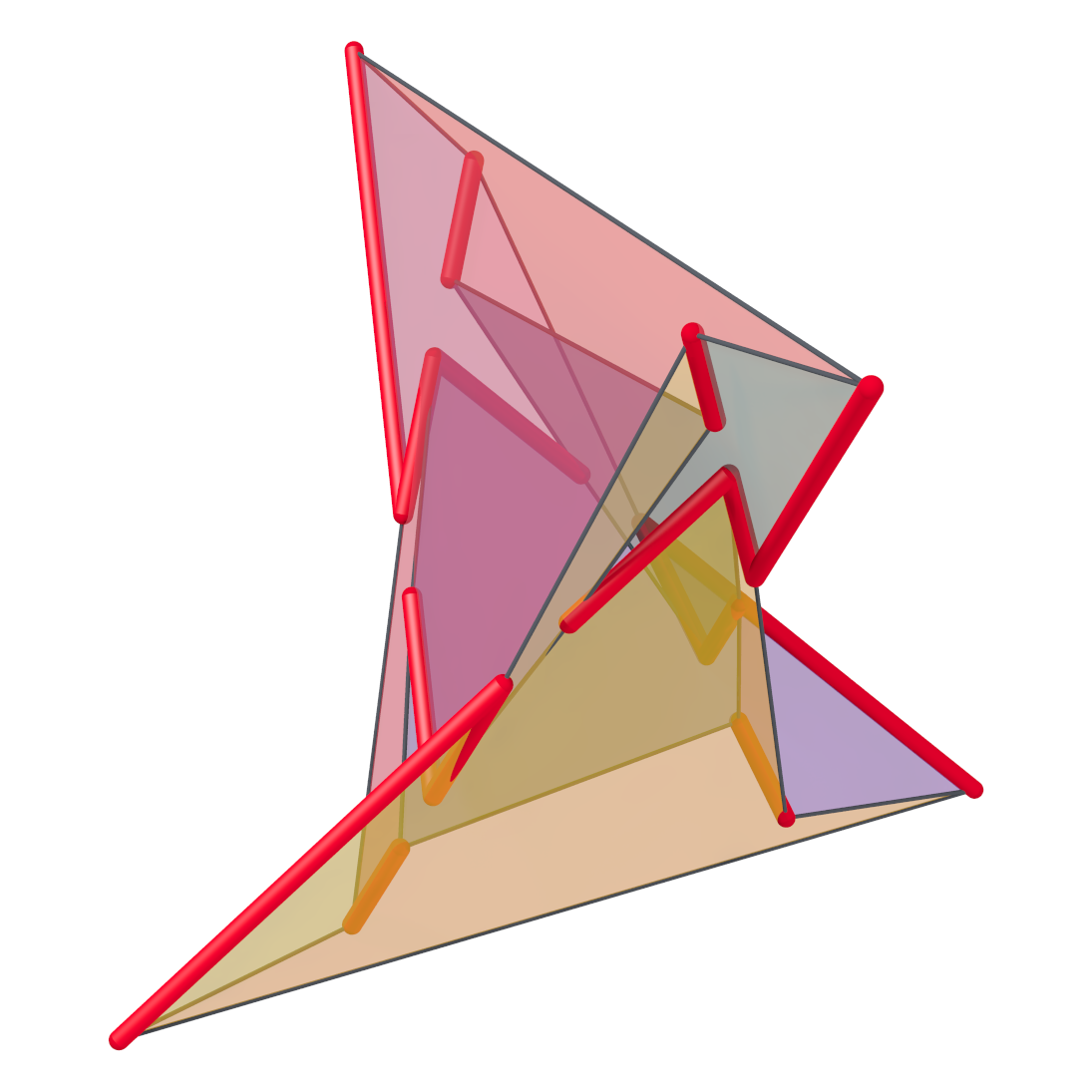}\caption{$\cM$}\end{subfigure}
\caption{The two polyhedra with translucent faces. The sixteen edges belonging to doubled pairs are drawn in red; both panels use the same viewing direction. In $\cP$ several doubled edges run along the long spikes, while in $\cM$ they are spread around the central part of the surface.}
\label{fig:compare}
\end{figure}

\section{Concluding remarks}

The example was found by a computational geometric search, using significant computational resources. The search algorithm was developed, optimized, implemented in Python, and verified through a highly autonomous AI framework. The method rediscovered the Szilassi polyhedron and Mizhaev's example as well. The goal of this note is solely to report the discovery of our new polyhedron. A detailed description of the computational methodology and its further applications will be presented in a subsequent work.

Apart from the two eight-faced examples, the only known face-neighbourly polyhedra are still the tetrahedron and the Szilassi polyhedron. It is not known if any further examples exist. In particular, the case of twelve faces, each pair sharing exactly one edge, remains open as well.

\paragraph{Acknowledgements.} The authors acknowledge the significant use of artificial intelligence (Claude) in most phases of this work. The authors reviewed and validated all of its outputs.

\end{document}